\numberwithin{equation}{section}
\newtheorem{thm}{Theorem}[section]
\newtheorem{alg}[thm]{Algorithm}
\newtheorem{rem}[thm]{\it Remark }
\newcommand{\sL}{\mbox{$\mathcal L$}}
\newcommand{\dx}{\textrm{\,d}x}
\newcommand{\N}{\mathbb N}
\newcommand{\Z}{\mathbb Z}
\newcommand{\Zplus}{\Z_+}
\newcommand{\C}{\mathbb C}
\newcommand{\ip}[2]{\left\langle #1,\,#2\right\rangle}
\newcommand{\bigO}[1]{\mbox{${\mathcal O}(#1)$}}
\newcommand{\hyper}[5]{\,{}_{#1}F_{#2}\!\left(%
            \begin{array}{cc}{\displaystyle{#3}}\\[0.25ex]%
            {\displaystyle{#4}} \end{array}\bigg|\,{\displaystyle{#5}}%
            \right)}
\newcommand{\rbinom}[2]{\mbox{$\displaystyle\binom{#1}{#2}^{\!\!-1}\!\!$}}
\begin{document}
\title{B\'ezier representation of the constrained dual Bernstein polynomials}
\author{Stanis{\l}aw Lewanowicz${}^\star$}
\thanks{${}^\star$Corresponding author}

\author{Pawe{\l} Wo\'zny}

\address{Institute of Computer Science, University of Wroc{\l}aw,
         ul.~F.~Joliot-Curie 15, 50-383 Wroc{\l}aw, Poland}
\email{\{Stanislaw.Lewanowicz, Pawel.Wozny\}@ii.uni.wroc.pl}
\date{\today}

\maketitle
\thispagestyle{empty}

\begin{abstract} \small 
Explicit formulae  for the B\'ezier coefficients of the constrained 
dual Bernstein basis polynomials
are derived in terms of the Hahn orthogonal polynomials. 
Using difference properties of the latter
polynomials, efficient recursive scheme is obtained to compute these
coefficients. Applications of this result
to some problems of CAGD is  discussed.\\

\noindent \textit{AMS classification}: {Primary 41A10. Secondary 65D17, 33D45}

\noindent \textit{Keywords.}{
                 Dual Bernstein basis;  Constrained dual  Bernstein basis; B\'ezier polynomial form;                          
                          Jacobi~polynomials; Hahn~polynomials.
                          }
\end{abstract}


\section{Introduction}
                                                        \label{SS:Intr}

Let $\Pi_n^{(k,l)}$, where $k+l\le n$, be the space of all polynomials of degree $\le n$,
whose derivatives
of order  $\le k-1$ at $t=0$, as well as derivatives of order  $\le l-1$ at $t=1$,
vanish. In particular,  $\Pi_n:=\Pi_n^{(0,0)}$ is the space of the unconstrained polynomials
of degree at most $n$. Obviously,  $\mbox{dim}\;\Pi_n^{(k,l)}=n-k-l+1$, and the Bernstein
polynomials
\(
\left\{B^n_k,B^n_{k+1},\ldots,B^n_{n-l}\right\},
\)
where
\[
   B^n_{i}(x)=\binom ni x^i(1-x)^{n-i} \qquad (0\le i\le n),
\]
form a basis of this space.
There is a unique \textit{dual constrained Bernstein basis of degree} $n$,
\[ 
\left\{D^{(n,k,l)}_k(x;\alpha,\beta),\,D^{(n,k,l)}_{k+1}(x;\alpha,\beta),\ldots,
D^{(n,k,l)}_{n-l}(x;\alpha,\beta)\right\}\subset\Pi_n^{(k,l)},
\] 
satisfying the relation
\(
	\ip{D^{(n,k,l)}_i}{B^n_j}=\delta_{ij}\quad (i,j=k,k+1,\ldots,n-l),
\)
where $\delta_{ij}$ equals 1 if $i=j$ and 0 otherwise, and the inner product $\langle\cdot,\cdot\rangle$ is given by
\begin{equation}\label{E:Jinprod}
\ip fg := \int_{0}^{1}
\,(1-x)^{\alpha}x^{\beta}\,f(x)\,g(x)\,\dx \qquad (\alpha,\;\beta>-1).
\end{equation}
The unconstrained dual basis $D^n_i(x;\alpha,\beta)$
corresponds to the case $k=l=0$, hence $D^n_i(x;\alpha,\beta)=D^{(n,0,0)}_i(x;\alpha,\beta)$.

Dual Bernstein polynomials $D^{(n,k,l)}_i(x;\alpha,\beta)$ were studied by Ciesielski \cite{Cie87b}
(case  $\alpha=\beta=0$ and $k=l=0$), also by J\"uttler \cite{Jue98} 
(case $\alpha=\beta=0$ and general $k=l$), Rababah and Al-Natour \cite{RA07} 
($k=l=0$ and general $\alpha,\,\beta$) and \cite{RA08} (general $\alpha,\,\beta$ and $k=l$). 
  
In \cite{LW06}, we have given
  explicit expressions for $D^n_i(x;\alpha,\beta)$  in terms of the shifted Jacobi 
  polynomials $R^{(\alpha,\beta)}_{j}(x)$, which, as it is well known, 
form a set of orthogonal polynomials with respect to the inner product \eqref{E:Jinprod}. 
See also \eqref{E:B-J}. 
In \cite{WL09}, we show that the constrained polynomials $D^{(n,k,l)}_i(x;\alpha,\beta)$ are
related to the unconstrained dual Bernstein polynomials of degree $n-k-l$,
with parameters $\alpha+2l$ and $\beta+2k$.  Consequently, using the above-mentioned 
results of \cite{LW06}, we obtain an
 explicit  orthogonal  representation, as well as  degree elevation formula 
 for the constrained dual Bernstein basis polynomials. The latter result plays
 a crucial role in the 
 algorithm of multi-degree reduction of B\'ezier curves, given in the cited paper.

In some  computer-aided geometric design applications, like curve intersection using
B\'ezier clipping 
(see, e.g., \cite{SN90,BJ07,LZLW09}), 
polynomial approximation of rational B\'ezier curves \cite{LWK10},
or degree reduction of B\'ezier curves (see, e.g., \cite{WL09} and the papers cited therein),
it is desirable to have the
B\'ezier-Bernstein representation of the dual constrained polynomials,
\[
	 D^{(n,k,l)}_i(x;\alpha,\beta)=\sum_{j=k}^{n-l}C_{ij}(n,k,l,\alpha,\beta)\,  
	                   B^{n}_j(x).
\]
In this paper, we give an efficient algorithm to compute the coefficients
$C_{ij}(n,k,l,\alpha,\beta)$ for $i,j=1,2\ldots,n$,
which has $\bigO{n^2}$ complexity. This should be compared to a rather complicated
recursive algorithm 
given in \cite{Jue98} and its generalization of \cite{RA08}.

In the sequel, we use the notation 
\[
	(c)_k:=\prod_{j=0}^{k-1}(c+j)\quad (k\ge0).
\]
The \textit{generalized hypergeometric series} is defined by
(see, e.g., \cite[\S2.1]{AAR99}, or \cite[\S1.4]{KLS10})
\[
 \hyper rs {a_1,\ldots, a_r}{b_1,\ldots,b_s}z
 :=\sum_{k=0}^{\infty}\frac{(a_1)_k\ldots(a_r)_k}
                           {k!(b_1)_k\ldots(b_s)_k}\,z^k,
\]
where $r,\,s\in\Zplus$ and $a_1,\ldots, a_r,\,b_1,\ldots,\,b_s$, $z$ $\in \C$.

\section{Dual Bernstein polynomials}
\label{S:dBer}
 \textit{Bernstein polynomial basis of  degree} $n$
($n\in\N$) is given by
\[
B_{i}^{n}(x):=\binom{n}{i}\, x^{i}\,(1-x)^{n-i}\qquad  (0 \leq i\leq n).
\]
Recall that
\begin{equation}\label{E:SJ}
R^{(\alpha,\beta)}_{m}(x):=  \frac{(\alpha+1)_m}{m!}
	\hyper{2}{1}{-m,\,m+\alpha+\beta+1}{\alpha+1}{1-x}
\end{equation}
are the \textit{shifted Jacobi polynomials}~\cite[p. 280, Eq. (46)]{Luk69}.
For $\alpha>-1$ and $\beta>-1$, polynomials \eqref{E:SJ} are
orthogonal with respect to the scalar product \eqref{E:Jinprod}
(see, e.g., \cite[p. 273]{Luk69}); more specifically,
\[ 
\ip{ R_{m}^{(\alpha,\beta)}}{R_{n}^{(\alpha,\beta)}}
=\delta_{mn}\,B(\alpha+1,\,\beta+1)\,
       \frac{(\alpha+1)_m(\beta+1)_m}{m!(2m/\sigma+1)(\sigma)_{m}}
       \qquad (m,n\ge0)
\] 
with $\sigma:=\alpha+\beta+1$ and
\(
B(\lambda,\,\mu)={\Gamma(\lambda)\Gamma(\mu)}/{\Gamma(\lambda+\mu)}.
\)
Recall that the \textit{Hahn polynomials} are defined by  \cite[\S9.5]{KLS10}
\begin{equation}\label{E:Hahn}
  Q_m\!\left(x;\alpha,\beta,N\right):=
    \hyper32{-m,m+\alpha+\beta+1,-x}{\alpha+1,-N}1
    \quad (m=0,1,\ldots ,N;\;N\in\N).
\end{equation} 
Bernstein polynomials have the following representation
in terms of shifted Jacobi polynomial basis:
\begin{align}\label{E:B-J}
 B^n_i(x)
 =&\,\binom{n}i
  {(\alpha+1)_{n-i}(\beta+1)_i}  
  \sum_{j=0}^{n}\,
   \frac{(2j+\sigma)(-n)_j}
        {(\alpha+1)_j(j+\sigma)_{n+1}}
   \,Q_{j}\!\left(i;\beta,\alpha,n\right)
           R^{(\alpha,\beta)}_j(x), 
\end{align}
where $0\le i\le n$. This result follows from a formula given
in \cite{RZAG98}, using \cite[Cor. 3.3.5]{AAR99}; see also \cite[Eq. (5.4)]{LW06}.
Conversely, shifted Jacobi polynomials have
the following representation
in terms of Bernstein basis \cite{Cie87a}:
\begin{align}\label{E:J-B}
R^{(\alpha,\beta)}_i(x)=&
     \dfrac{(\alpha+1)_i}{i!}\,
      \,\sum_{j=0}^{n}\,Q_i\left(n-j;\alpha,\beta,n\right)
            \,B^n_j(x) \qquad (0\le i\le n).
\end{align}

Associated with the basis $B^n_i(x)$ there is a unique 
\textit{dual  Bernstein polynomial  basis of degree $n$},
\[
D^n_0(x;\alpha,\beta),\,D^n_1(x;\alpha,\beta),\ldots ,D^n_n(x;\alpha,\beta),
\]
defined so that
\begin{equation}\label{E:J-biort}
 \ip{D^n_i}{B^n_j}=\delta_{ij}\qquad (i,j=0,1,\ldots ,n).
\end{equation}
In \cite{LW06}, we gave a number of properties of polynomials $D^n_i(x;\alpha,\beta)$,
including 
the expansion
\begin{align}
\label{E:D-J}
D^n_i(x;\alpha,\beta) = &\frac1{B(\alpha+1,\,\beta+1)}
         \sum_{j=0}^{n}(-1)^j
       \frac{(2j/\sigma+1)(\sigma)_{j}}{(\alpha+1)_j}
              Q_j(i;\beta,\alpha,n)R_{j}^{(\alpha,\beta)}(x),
\end{align}
where $0\le i\le n$, as well as the following alternative formula 
representing the dual Bernstein polynomial $D^n_i(x;\alpha,\beta)$
($0\le i\le n$) as a "short" linear combination of 
 Jacobi polynomials with shifted parameters:
\begin{equation}\label{E:dB-nicef}
D^n_i(x;\alpha,\beta)=\frac{(-1)^{n-i}(\sigma+1)_{n}}
                           {B(\alpha+1,\,\beta+1)\,(\alpha+1)_{n-i}(\beta+1)_i}
\sum_{j=0}^{i}\frac{(-i)_j}{(-n)_j}  
\,R^{(\alpha,\beta+j+1)}_{n-j}(x).
\end{equation}

In CAGD applications, it is desirable to express the dual basis polynomials in terms of 
the Bernstein basis. We have the following result.
\begin{thm}
		\label{T:DinB}
For $i=0,1,\ldots,n$, we have 		
\begin{equation}\label{E:DinB}
D^n_i(x;\alpha,\beta)=
                      \sum_{j=0}^{n}c_{ij}(n,\alpha,\beta)\,B^n_j(x),
\end{equation}
where the B\'ezier coefficients $c_{ij}(n,\alpha,\beta)$ ($0\le j\le n$)
are given by any of the following two forms:
\begin{align}\label{E:cij}
c_{ij}(n,\alpha,\beta):=& 
                           \frac{1}{B(\alpha+1,\,\beta+1)}
			   \sum_{m=0}^n\frac{(2m/\sigma+1)(\beta+1)_m(\sigma)_m}{m!(\alpha+1)_m}
			         Q_m(i;\beta,\alpha,n)\,Q_m(j;\beta,\alpha,n),\\[1ex]
				 \label{E:cij-sh}
c_{ij}(n,\alpha,\beta):=& \frac{(-1)^{n}(\sigma+1)_{n}(-\alpha-n)_i}
                           {B(\alpha+1,\,\beta+1)\,n!(\beta+1)_i}\sum_{h=0}^{i}\frac{(-i)_h}{(-\alpha-n)_h}
			   Q_{n-h}(n-j;\alpha,\beta+h+1,n). 				 
\end{align}
\end{thm}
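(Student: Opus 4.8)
The plan is to establish the two representations \eqref{E:cij} and \eqref{E:cij-sh} independently, each time starting from one of the two orthogonal expansions of $D^n_i$ already recorded in the excerpt and then rewriting every Jacobi polynomial in the Bernstein basis by means of \eqref{E:J-B}. Because the expansion of any polynomial in $\{B^n_0,\ldots,B^n_n\}$ is unique, it suffices to substitute, interchange the (finite) sums, and collect the coefficient of $B^n_j(x)$; no separate check that the two forms agree is then needed, since both compute the unique B\'ezier coefficients of the same polynomial.

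For the first form \eqref{E:cij} I would begin from \eqref{E:D-J}. Inserting \eqref{E:J-B} for each $R^{(\alpha,\beta)}_m(x)$ and swapping the order of summation, the factor $(\alpha+1)_m$ from \eqref{E:J-B} cancels the $(\alpha+1)_m$ in the denominator of \eqref{E:D-J}, so the coefficient of $B^n_j(x)$ becomes
\[
\frac{1}{B(\alpha+1,\beta+1)}\sum_{m=0}^n(-1)^m\frac{(2m/\sigma+1)(\sigma)_m}{m!}\,Q_m(i;\beta,\alpha,n)\,Q_m(n-j;\alpha,\beta,n).
\]
The decisive step is the reflection symmetry of the Hahn polynomials,
\[
Q_m(n-j;\alpha,\beta,n)=(-1)^m\frac{(\beta+1)_m}{(\alpha+1)_m}\,Q_m(j;\beta,\alpha,n),
\]
which exchanges the two parameters and cancels the sign $(-1)^m$; it turns the displayed sum into exactly \eqref{E:cij}.

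For the second form \eqref{E:cij-sh} I would instead start from the short representation \eqref{E:dB-nicef}. Each term there is a single shifted Jacobi polynomial $R^{(\alpha,\beta+h+1)}_{n-h}(x)$ of degree $n-h\le n$, so \eqref{E:J-B} applies verbatim with second parameter $\beta+h+1$ and index $n-h$. After substituting and interchanging the sums, the coefficient of $B^n_j(x)$ is
\[
\frac{(-1)^{n-i}(\sigma+1)_n}{B(\alpha+1,\beta+1)(\alpha+1)_{n-i}(\beta+1)_i}\sum_{h=0}^i\frac{(-i)_h}{(-n)_h}\frac{(\alpha+1)_{n-h}}{(n-h)!}\,Q_{n-h}(n-j;\alpha,\beta+h+1,n).
\]
The rest is pure Pochhammer bookkeeping: the identities $(\alpha+1)_{n-h}=(-1)^h(\alpha+1)_n/(-\alpha-n)_h$ and $(n-h)!=(-1)^h n!/(-n)_h$ make $(-n)_h$ cancel, reducing the summand to $\big((\alpha+1)_n/n!\big)\,(-i)_h/(-\alpha-n)_h$ times the Hahn value; pulling the $h$-free factor $(\alpha+1)_n/n!$ out and simplifying $(\alpha+1)_n/(\alpha+1)_{n-i}=(-1)^i(-\alpha-n)_i$ in the prefactor yields precisely \eqref{E:cij-sh}.

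The only genuine obstacle is getting the Hahn reflection identity used in the first part correct and correctly oriented; after that, both derivations are routine cancellations of Pochhammer symbols. A minor point worth stating explicitly is that \eqref{E:J-B} legitimately expands a Jacobi polynomial of any degree $\le n$ (in particular the degree-$(n-h)$ polynomials with shifted parameter $\beta+h+1$) in the full degree-$n$ Bernstein basis, which is what licenses the substitution in the second part.
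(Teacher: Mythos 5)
Your proposal is correct and follows exactly the paper's route: the authors' proof is the one-line statement that \eqref{E:cij} and \eqref{E:cij-sh} follow by inserting \eqref{E:J-B} into \eqref{E:D-J} and \eqref{E:dB-nicef} respectively ``and doing some algebra,'' and your write-up simply makes that algebra explicit. The Hahn reflection identity $Q_m(n-j;\alpha,\beta,n)=(-1)^m\frac{(\beta+1)_m}{(\alpha+1)_m}Q_m(j;\beta,\alpha,n)$ and the Pochhammer manipulations you state are all correct and correctly oriented.
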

\begin{proof}
Formula \eqref{E:DinB} with $c_{ij}(n,\alpha,\beta)$ given by \eqref{E:cij}  
(respectively, \eqref{E:cij-sh})  follows
by inserting expansion  \eqref{E:J-B} in  \eqref{E:D-J} (respectively, \eqref{E:dB-nicef})
and doing some algebra.
\end{proof}

Now, we show that the coefficients $c_{ij}(n,\alpha,\beta)$ ($i,j=1,2\ldots,n$) given in
Theorem~\ref{T:DinB}
can be computed recursively, with a cost  $\bigO{n^2}$.
\begin{thm}
	\label{T:c-rec}
Quantities $c_{ij}\equiv c_{ij}(n,\alpha,\beta)$ defined in \eqref{E:cij}, \eqref{E:cij-sh}
satisfy the following recurrence relation: 
	\begin{align}\label{E:c-rec}
c_{i+1,j}=&\frac1{A(i)}\Big\{(i-j)(2i+2j-2n-\alpha+\beta)\,c_{ij}\nonumber \\
   &\hphantom{\frac1{A(i)}\Big\{}
   + B(j)\,c_{i,j-1}+A(j)\,c_{i,j+1}-B(i)\,c_{i-1,j}\Big\},
	 \end{align}
where $0\le i\le n-1$, $0\le j\le n$ and
\[  
A(h):=(h-n)(h+\beta+1),\qquad
B(h):=h(h-n-\alpha-1). 
\] 
The starting values are
\begin{equation}
	\label{E:crec-start}
c_{0j}
 =
 \frac{(\sigma+1)_n(\beta+2)_n}{n!\,B(\alpha+1,\beta+1)}\cdot
 \frac{(-1)^j}{(\alpha+1)_{n-j}(\beta+2)_j}
 \qquad (0\le j\le n). 	
\end{equation}
\end{thm}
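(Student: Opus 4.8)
The plan is to work from the symmetric form \eqref{E:cij}, in which $c_{ij}$ appears as a bilinear combination of the Hahn polynomials $Q_m(i;\beta,\alpha,n)$ and $Q_m(j;\beta,\alpha,n)$ evaluated at the two \emph{arguments} $i$ and $j$, with a weight that is independent of $i$ and $j$. The recurrence should then follow from the second-order difference equation satisfied by the Hahn polynomials in their argument variable. Performing the parameter substitution $(\alpha,\beta,N)\mapsto(\beta,\alpha,n)$ in the standard Hahn difference equation \cite[\S9.5]{KLS10}, the polynomial $y(x)=Q_m(x;\beta,\alpha,n)$ satisfies
\[
  m(m+\sigma)\,y(x)=A(x)\,y(x+1)-[A(x)+B(x)]\,y(x)+B(x)\,y(x-1),
\]
where $A$ and $B$ are \emph{exactly} the quadratics $A(h)=(h-n)(h+\beta+1)$ and $B(h)=h(h-n-\alpha-1)$ of the statement. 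The decisive structural feature is that $A$ and $B$ carry no dependence on $m$, while the eigenvalue $m(m+\sigma)$ does.

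The core of the argument is an eigenvalue-cancellation trick. I would write the difference equation once at $x=i$ and once at $x=j$, multiply the first by $Q_m(j;\beta,\alpha,n)$ and the second by $Q_m(i;\beta,\alpha,n)$, and subtract; the eigenvalue terms $m(m+\sigma)\,Q_m(i;\beta,\alpha,n)\,Q_m(j;\beta,\alpha,n)$ are identical on both sides and cancel (so the sign convention chosen for the eigenvalue is irrelevant). What remains is an $(i,j)$-local relation among the products $Q_m(i\pm1)Q_m(j)$, $Q_m(i)Q_m(j\pm1)$ and $Q_m(i)Q_m(j)$ whose coefficients are $m$-free. Multiplying by the weight $(2m/\sigma+1)(\beta+1)_m(\sigma)_m/[m!\,(\alpha+1)_m]$, summing over $m$, and dividing by $B(\alpha+1,\beta+1)$ turns every product $Q_m(p)Q_m(q)$ into the coefficient $c_{pq}$ via \eqref{E:cij}, and solving for $A(i)\,c_{i+1,j}$ produces \eqref{E:c-rec}. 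The only nontrivial bookkeeping is to check that the coefficient of $c_{ij}$ collapses correctly: a short computation gives $[A(i)+B(i)]-[A(j)+B(j)]=(i-j)(2i+2j-2n-\alpha+\beta)$, as required. The endpoint cases need no separate treatment, since $B(0)=0$ and $A(n)=0$ automatically suppress the spurious terms $c_{i,-1}$, $\,c_{-1,j}$ and $c_{i,n+1}$.

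For the initial values I would instead use the short form \eqref{E:cij-sh} at $i=0$. There the factor $(-i)_h=(0)_h$ annihilates every summand except $h=0$, leaving $c_{0j}$ proportional to the top-degree Hahn polynomial $Q_n(n-j;\alpha,\beta+1,n)$. Because the degree equals $N=n$, the factors $(-n)_k$ cancel between numerator and denominator of the ${}_3F_2$, reducing it to a terminating ${}_2F_1$ which Chu--Vandermonde sums in closed form as $Q_n(x;\alpha,\beta+1,n)=(-n-\beta-1)_x/(\alpha+1)_x$. Evaluating at $x=n-j$ and rewriting $(-n-\beta-1)_{n-j}=(-1)^{n-j}(\beta+2)_n/(\beta+2)_j$ reproduces \eqref{E:crec-start} after collecting the sign $(-1)^n(-1)^{n-j}=(-1)^j$.

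I expect the conceptual step — the cancellation of the eigenvalue — to be short and clean; the main obstacle is purely the bookkeeping, namely correctly identifying the theorem's $A$ and $B$ with the coefficients of the Hahn difference equation under the parameter swap $(\alpha,\beta,N)\mapsto(\beta,\alpha,n)$, and then verifying the $c_{ij}$-coefficient identity and the Pochhammer manipulation in the starting values. The perfect agreement of the recurrence coefficients together with the coefficient identity serves as a strong internal consistency check on the parameter matching.
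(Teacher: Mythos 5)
Your proposal is correct and follows essentially the same route as the paper: the recurrence comes from applying the Hahn difference operator in the argument variable to the symmetric form \eqref{E:cij} and observing that $\sL^n_i\,c_{ij}=\sL^n_j\,c_{ij}$ (your eigenvalue-cancellation after cross-multiplying is exactly this identity written termwise), and the starting values come from \eqref{E:cij-sh} at $i=0$ reduced to a terminating ${}_2F_1$ summed by Chu--Vandermonde. The explicit check $[A(i)+B(i)]-[A(j)+B(j)]=(i-j)(2i+2j-2n-\alpha+\beta)$, which the paper leaves implicit, is verified correctly.
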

\begin{proof} Observe that the expression on the right-hand side of \eqref{E:cij} 
is the linear combination of products $Q_m(i;\beta,\alpha,n)\,Q_m(j;\beta,\alpha,n)$,
with the coefficients independent of $i$ and $j$. 
Hence,  the proof of the Theorem can  be based on an idea similar to the one
used in  \cite[Thm A6]{WL09} to prove a recursion satisfied by the dual discrete Bernstein
polynomials.

First, recall that the Hahn polynomials satisfy the difference equation (see, e.g., {\cite[\S9.5]{KLS10}}),
\[  
	\sL^N_x Q_m(x;\beta,\alpha,N)=m(m+\sigma)\,Q_m(x;\beta,\alpha,N),
\]
where the operator $\sL^N_x$ is given by
\[ 
\sL^N_x\,y(x)=a_N(x)\,y(x+1)-c_N(x)\,y(x)+b_N(x)\,y(x-1)	
\]  
with
\begin{gather*}
a_N(x):=(x-N)(x+\beta+1),\qquad
b_N(x):=x(x-\alpha-N-1),\\[1ex]
c_N(x):=a_N(x)+b_N(x).	
\end{gather*}
Now,  using equation \eqref{E:cij} we easily obtain that
\begin{align*}
	\sL^n_i\, c_{ij} = &\,
 \sum_{m=0}^n\frac{(2m/\sigma+1)(\beta+1)_m(\sigma)_m}{B(\alpha+1,\,\beta+1)m!(\alpha+1)_m} \\
		&\hphantom{\sum_{m=0}^nn} 
		\times m(m+\sigma)\,Q_m(i;\beta,\alpha,n)\,Q_m(j;\beta,\alpha,n)\\
				 = &\,\sL^n_j\, c_{ij}.
\end{align*}
This implies equation \eqref{E:c-rec}.

By \eqref{E:cij-sh} and \eqref{E:Hahn}, 
the sum in $c_{0j}$ reduces to  ${}_2F_1(j-n,n+\sigma+1;\alpha+1;1)$
which can be summed by the Chu-Vandermonde formula (see, e.g., \cite[(1.5.4)]{KLS10}).
Hence follows \eqref{E:crec-start}.
\end{proof}
\begin{rem}
 Rababah and Al-Natour \cite{RA08} have given the
formula
\begin{align*}
c_{ij}(n,\alpha,\beta) =& {(-1)^{i+j}}\rbinom{n}{i}\rbinom{n}{j}\\[1ex]
&\times\sum_{h=0}^{\min(i,j)}\!\!(2h+\beta+1)\binom{n+\sigma+h}{n+\beta+h+1}
\rbinom{n+\alpha-h}{n-h}\, v_{ih}\, v_{jh}	  
\end{align*}
with 
\[
	 v_{mh}:=\binom{n+\beta+h+1}{n-m}\binom{n+\alpha-h}{n+\alpha-m},
\]
which contains the earlier result of J\"uttler \cite{Jue98} for $\alpha=\beta=0$.
\end{rem}

\section{Constrained dual Bernstein polynomials}
							\label{S:constrdual}

Let $\Pi_n^{(k,l)}$, where $k+l\le n$, be the space of all polynomials of degree $\le n$,
whose derivatives
of order  $\le k-1$ at $t=0$, as well as derivatives of order  $\le l-1$ at $t=1$,
vanish:
\[ 
\Pi_n^{(k,l)}:=
       \left\{P\in\Pi_n\::\:
       P^{(i)}(0)=0\;\:(0\le i\le k-1);
       \;\:        
       P^{(j)}(1)=0\;\: (0\le j\le l-1)\right\}.
\]  
  In particular,  $\Pi_n:=\Pi_n^{(0,0)}$ is the space of the unconstrained polynomials
of degree at most $n$. Obviously,  $\mbox{dim}\;\Pi_n^{(k,l)}=n-k-l+1$, and the Bernstein polynomials
\(
\left\{B^n_k,B^n_{k+1},\ldots,B^n_{n-l}\right\}
\)
form a basis of this space.
There is a unique \textit{dual constrained Bernstein basis of degree} $n$,
\[
D^{(n,k,l)}_k(x;\alpha,\beta),\,D^{(n,k,l)}_{k+1}(x;\alpha,\beta),\ldots,
D^{(n,k,l)}_{n-l}(x;\alpha,\beta),
\]
satisfying
\[
	\ip{D^{(n,k,l)}_i}{B^n_j} =\delta_{ij}\qquad (i,j=k,k+1,\ldots,n-l).
\]
Recently, we have shown that the  polynomials $D^{(n,k,l)}_i$ can be expressed in terms of  
dual Bernstein polynomials without constraints. Namely, we have the following result.  
\begin{thm}[\cite{WL09}] \label{T:contrDinD}
For $i=k,k+1,\ldots,n-l$, the following formula holds:
\begin{equation}\label{E:constrD}
D^{(n,k,l)}_i(x;\alpha,\beta)
= U_i\cdot x^k(1-x)^l\, D^{n-k-l}_{i-k}(x;\alpha+2l,\beta+2k), 
\end{equation}
where
\begin{equation}
	\label{E:U}
	U_i:=\binom{n-k-l}{i-k}\rbinom{n}{i}.
\end{equation}
\end{thm}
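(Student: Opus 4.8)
The plan is to exploit the defining characterization of the constrained dual basis. For fixed $k,l$ with $k+l\le n$, the polynomials $D^{(n,k,l)}_i$ ($k\le i\le n-l$) are the \emph{unique} elements of $\Pi_n^{(k,l)}$ satisfying $\ip{D^{(n,k,l)}_i}{B^n_j}=\delta_{ij}$ for $k\le i,j\le n-l$. Since $\{B^n_k,\ldots,B^n_{n-l}\}$ is a basis of $\Pi_n^{(k,l)}$ and the inner product \eqref{E:Jinprod} is positive definite, hence nondegenerate, on this finite-dimensional space, it suffices to verify that the right-hand side of \eqref{E:constrD}, which I denote $\widetilde D_i$, possesses both defining properties. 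Equality then follows from uniqueness.

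First I would check membership $\widetilde D_i\in\Pi_n^{(k,l)}$. Writing $m:=n-k-l$, the factor $D^{m}_{i-k}$ is a polynomial of degree at most $m$, so $\widetilde D_i$ has degree at most $k+l+m=n$. The factor $x^k$ forces a zero of order $k$ at $x=0$, annihilating all derivatives of order $\le k-1$ there, and likewise $(1-x)^l$ annihilates all derivatives of order $\le l-1$ at $x=1$. Hence $\widetilde D_i\in\Pi_n^{(k,l)}$; this step is routine.

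The substance is the biorthogonality. For $k\le j\le n-l$ one has $0\le j-k\le m$, and I would factor the Bernstein polynomial as
\[
B^n_j(x)=\binom nj\binom{m}{j-k}^{-1}x^k(1-x)^l\,B^{m}_{j-k}(x),
\]
which follows by matching powers of $x$ and $1-x$. Inserting this together with $\widetilde D_i=U_i\,x^k(1-x)^l\,D^{m}_{i-k}(x;\alpha+2l,\beta+2k)$ into the inner product, the two factors $x^k(1-x)^l$ combine with the weight $(1-x)^\alpha x^\beta$ to produce exactly the shifted weight $(1-x)^{\alpha+2l}x^{\beta+2k}$, giving
\[
\ip{\widetilde D_i}{B^n_j}=U_i\binom nj\binom{m}{j-k}^{-1}\int_0^1(1-x)^{\alpha+2l}x^{\beta+2k}\,D^{m}_{i-k}(x;\alpha+2l,\beta+2k)\,B^{m}_{j-k}(x)\dx.
\]
The integral is the degree-$m$ biorthogonality \eqref{E:J-biort} with parameters $(\alpha+2l,\beta+2k)$, so it equals $\delta_{i-k,\,j-k}=\delta_{ij}$.

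Finally I would verify the normalization: putting $i=j$ and using $U_i=\binom{m}{i-k}\binom ni^{-1}$ from \eqref{E:U} collapses the prefactor $U_i\binom ni\binom{m}{i-k}^{-1}$ to $1$, whence $\ip{\widetilde D_i}{B^n_j}=\delta_{ij}$ throughout the admissible range. Having established both defining properties, uniqueness yields $\widetilde D_i=D^{(n,k,l)}_i$, which is \eqref{E:constrD}. The only real care needed is the bookkeeping confirming that the surplus powers of $x$ and $1-x$ shift the Jacobi parameters by precisely $2l$ and $2k$, and that $U_i$ is exactly the constant making the diagonal entries unity; everything conceptual is forced by the uniqueness of the dual basis.
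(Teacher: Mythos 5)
Your proof is correct and complete: the factorization $B^n_j=\binom nj\rbinom{m}{j-k}x^k(1-x)^l B^m_{j-k}$, the resulting shift of the weight to $(1-x)^{\alpha+2l}x^{\beta+2k}$, and the normalization $U_i\binom ni\rbinom{m}{i-k}=1$ all check out, and the uniqueness of the dual basis (nondegeneracy of the inner product on the finite-dimensional space $\Pi_n^{(k,l)}$) then forces \eqref{E:constrD}. The paper itself gives no proof here --- Theorem~\ref{T:contrDinD} is simply cited from \cite{WL09} --- but your argument is the natural self-contained verification and is essentially the standard route to this identity.
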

Now, using expansion \eqref{E:DinB} in \eqref{E:constrD}, 
 the following result is easily obtained.
\begin{thm} \label{T:constrDinB}
The constrained dual basis polynomials have the  B\'ezier-Bernstein representation
\begin{equation}
	\label{E:constrDinB}
	 D^{(n,k,l)}_i(x;\alpha,\beta)=\sum_{j=k}^{n-l}C_{ij}(n,k,l,\alpha,\beta)\,  
	                   B^{n}_j(x)\qquad (k\le i\le n-l),
\end{equation}
where 
\begin{equation}
	\label{E:Cij}
C_{ij}(n,k,l,\alpha,\beta):=U_i\,U_j\,	
		c_{i-k,j-k}(n-k-l,\alpha+2l,\beta+2k),
\end{equation}
notation used being that of \eqref{E:cij} and \eqref{E:U}.
\end{thm}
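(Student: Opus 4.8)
The plan is to substitute the Bernstein expansion of the unconstrained dual polynomial directly into the factorization supplied by Theorem~\ref{T:contrDinD}. First I would take equation \eqref{E:constrD} and apply the representation \eqref{E:DinB} to the inner factor $D^{n-k-l}_{i-k}(x;\alpha+2l,\beta+2k)$, reading off the parameters as $n\mapsto n-k-l$, $i\mapsto i-k$, $\alpha\mapsto\alpha+2l$, $\beta\mapsto\beta+2k$. This yields
\[
D^{(n,k,l)}_i(x;\alpha,\beta)
=U_i\, x^k(1-x)^l\sum_{m=0}^{n-k-l}
 c_{i-k,m}(n-k-l,\alpha+2l,\beta+2k)\,B^{n-k-l}_m(x),
\]
so that the problem reduces to expressing each product $x^k(1-x)^l\,B^{n-k-l}_m(x)$ in the degree-$n$ Bernstein basis.

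The key computational step is precisely this conversion of a raised product back into $B^n_j$. Setting $j:=k+m$ and using $B^{n-k-l}_m(x)=\binom{n-k-l}{m}x^m(1-x)^{n-k-l-m}$, I would collect powers to obtain
\[
x^k(1-x)^l\,B^{n-k-l}_m(x)
=\binom{n-k-l}{j-k}\,x^{j}(1-x)^{n-j}
=\binom{n-k-l}{j-k}\rbinom{n}{j}\,B^n_j(x)
=U_j\,B^n_j(x),
\]
where the middle equality uses $x^j(1-x)^{n-j}=\binom{n}{j}^{-1}B^n_j(x)$ and the last equality is exactly the definition \eqref{E:U} of $U_j$.

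Finally I would reindex the sum from $m$ to $j=k+m$, noting that $m=0,1,\ldots,n-k-l$ corresponds to $j=k,k+1,\ldots,n-l$, and gather the scalar factors $U_i$ and $U_j$. This produces the representation \eqref{E:constrDinB} with
\[
C_{ij}(n,k,l,\alpha,\beta)=U_i\,U_j\,c_{i-k,j-k}(n-k-l,\alpha+2l,\beta+2k),
\]
in agreement with \eqref{E:Cij}.

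There is no genuine obstacle here, as the author's remark \emph{"easily obtained"} already signals. The only point requiring care is the bookkeeping of the binomial factors in the product step, namely recognizing that the ratio $\binom{n-k-l}{j-k}\binom{n}{j}^{-1}$ is precisely $U_j$, and then correctly matching the summation range after the shift $j=k+m$, so that only the Bernstein polynomials $B^n_j$ with $k\le j\le n-l$ survive. This range is of course consistent with $D^{(n,k,l)}_i$ belonging to the constrained space $\Pi_n^{(k,l)}$, which is spanned exactly by those $B^n_j$.
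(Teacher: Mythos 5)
Your proof is correct and follows exactly the route the paper indicates: substituting the expansion \eqref{E:DinB} (with parameters shifted to $n-k-l$, $\alpha+2l$, $\beta+2k$) into the factorization \eqref{E:constrD} of Theorem~\ref{T:contrDinD}. The paper leaves the details as ``easily obtained''; your computation of $x^k(1-x)^l B^{n-k-l}_{j-k}(x)=U_j B^n_j(x)$ and the reindexing are precisely the omitted bookkeeping, carried out correctly.
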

Observe that the quantities $C_{ij}$ can be put in a square  array (see~Table~\ref{Tab:C}).
\begin{table}[htb]
\caption{The $C$-table \label{Tab:C}}
\normalsize
\[
\begin{array}{cccccc}
    &0&0&\ldots&0& \\[2ex]
    0& C_{kk}&C_{k,k+1}&\ldots&C_{k,n-l}&0     \\[2ex]
    0& C_{k+1,k}&C_{k+1,k+1}&\ldots&C_{k+1,n-l}&0  \\[2ex]
    \multicolumn{6}{c}{\dotfill}\\[2ex]
    0& C_{n-l,k}&C_{n-l,k+1}&\ldots&C_{n-l,n-l}&0  \\[2ex]
     &0&0&\ldots&0& 
\end{array}
\]	
\end{table}

Combining \eqref{E:Cij} with Theorem~\ref{T:c-rec}, we obtain the following theorem.
\begin{thm}
	\label{T:C-rec}
Quantities $C_{ij}\equiv C_{ij}(n,k,l,\alpha,\beta)$ defined in \eqref{E:Cij} 
satisfy the recurrence relation 
	\begin{align}\label{E:C-rec}
C_{i+1,j}=&\frac{1}{A^\ast(i)}\Big\{(i-j)(2i+2j-2n-\alpha+\beta)\,C_{ij}\nonumber \\
&\hphantom{\frac{1}{A^\ast(i)}\Big\{}
+B^\ast(j)\,C_{i,j-1}+A^\ast(j)C_{i,j+1}-B^\ast(i)C_{i-1,j}\Big\},
	 \end{align}
where $k\le i\le n-l-1$, $k\le j\le n-l$ and
\[  
\begin{array}{l}
A^\ast(u):=(u-n)(u-k+1)(u+k+\beta+1)/(u+1),\\[2ex]	
B^\ast(u):=u(u-n-l-\alpha-1)(u-n+l-1)/(u-n-1).
\end{array}  
\] 
We adopt the convention that
$C_{ij}:=0$ if $i<k$, or $i>n-l$, or $j<k$, or $j>n-l$.
The starting values are
\begin{align}	\label{E:Crec-start}
C_{kj}
 =&\,W\cdot
 \frac{(-1)^{j} \,U_j\,}{(\alpha+2l+1)_{n-l-j}(k+\beta+2)_j}\qquad(j=k,k+1,\ldots, n-l), 	
\end{align}
where we use notation \eqref{E:U}, and
\begin{equation}
	\label{E:W}
W:=\rbinom{n}{k}\,
 \frac{(-1)^{k}(\sigma+2k+2l+1)_{n-k-l}(k+\beta+2)_{n-l}}{B(\alpha+2l+1,\,\beta+2k+1)\,(n-k-l)!}.	
\end{equation}
\end{thm}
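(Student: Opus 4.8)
The plan is to derive Theorem~\ref{T:C-rec} directly from Theorem~\ref{T:c-rec} via the substitution~\eqref{E:Cij}, treating the constrained quantities $C_{ij}$ as scaled versions of the unconstrained coefficients $c_{i-k,j-k}$ evaluated at the shifted parameters $(n',\alpha',\beta'):=(n-k-l,\,\alpha+2l,\,\beta+2k)$. Write $\widetilde{c}_{pq}:=c_{pq}(n',\alpha',\beta')$, so that $C_{ij}=U_i U_j\,\widetilde{c}_{i-k,\,j-k}$. The recurrence~\eqref{E:c-rec} already holds for the $\widetilde{c}_{pq}$ with the shifted data, i.e.\ with $A(h)=(h-n')(h+\beta'+1)=(h-n+k+l)(h+\beta+2k+1)$ and $B(h)=h(h-n'-\alpha'-1)=h(h-n+k-l-\alpha-1)$, and with the middle index combination $(p-q)(2p+2q-2n'-\alpha'+\beta')$. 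The whole task is then bookkeeping: re-index by $p=i-k$, $q=j-k$, and absorb the ratios $U_i/U_{i+1}$, $U_j/U_{j\pm1}$, $U_i/U_{i-1}$ into the coefficients.

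First I would set $p=i-k$, $q=j-k$ in~\eqref{E:c-rec} for the shifted data and multiply the whole relation by $U_{i+1}U_j$ to produce $C_{i+1,j}$ on the left. On the right, each term $\widetilde{c}_{p,q\pm1}$, $\widetilde{c}_{p\pm1,q}$ must be converted to $C$'s by inserting the appropriate ratio of $U$-factors: for instance $B(q)\,\widetilde{c}_{p,q-1}=B(q)\,(U_{i+1}U_j)/(U_i U_{j-1})\cdot C_{i,j-1}$, and after dividing through by $U_{i+1}U_j/(U_i U_j)=U_{i+1}/U_i$ one collects the leading normalization $1/A^\ast(i)$. Using~\eqref{E:U}, the binomial ratios simplify to rational functions of a single index; for example $U_{j}/U_{j-1}=\binom{n'}{q}\binom{n'}{q-1}^{-1}\cdot\binom{n}{j-1}\binom{n}{j}^{-1}$, which collapses to an explicit product of linear factors in $j$. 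The main computation is verifying that after this substitution the coefficient of $C_{ij}$ stays $(i-j)(2i+2j-2n-\alpha+\beta)/A^\ast(i)$ — note $(p-q)=(i-j)$ and $2p+2q-2n'-\alpha'+\beta'=2i+2j-2n-\alpha+\beta$ exactly, so the middle coefficient passes through untouched once normalized — and that the edge coefficients reorganize into $B^\ast(j)$, $A^\ast(j)$, and $-B^\ast(i)$ with the stated rational forms. \emph{The hard part will be} confirming that the $U$-ratios combine with $A(h)$, $B(h)$ to yield precisely $A^\ast(u)=(u-n)(u-k+1)(u+k+\beta+1)/(u+1)$ and $B^\ast(u)=u(u-n-l-\alpha-1)(u-n+l-1)/(u-n-1)$, including the cubic-over-linear shape; I expect the extra linear factors $(u-k+1)$, $1/(u+1)$ and $(u-n+l-1)$, $1/(u-n-1)$ to come exactly from the surviving $U$-ratios, and I would check a couple of boundary indices numerically (e.g.\ in Maple) as a sanity test.

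For the starting values~\eqref{E:Crec-start} I would specialize~\eqref{E:Cij} at $i=k$, giving $C_{kj}=U_k U_j\,\widetilde{c}_{0,\,j-k}$ with $U_k=\binom{n'}{0}\binom{n}{k}^{-1}=\binom{n}{k}^{-1}$. Substituting the shifted data into the closed form~\eqref{E:crec-start} yields $\widetilde{c}_{0,q}=\dfrac{(\sigma'+1)_{n'}(\beta'+2)_{n'}}{n'!\,B(\alpha'+1,\beta'+1)}\cdot\dfrac{(-1)^{q}}{(\alpha'+1)_{n'-q}(\beta'+2)_{q}}$ with $\sigma'=\alpha'+\beta'+1=\sigma+2k+2l$, $\alpha'+1=\alpha+2l+1$, $\beta'+2=\beta+2k+2$. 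With $q=j-k$ one gets $n'-q=n-l-j$, so $(\alpha'+1)_{n'-q}=(\alpha+2l+1)_{n-l-j}$ and $(\beta'+2)_{q}=(k+\beta+2)_{j-k}$; matching this against~\eqref{E:Crec-start} requires rewriting $(\beta+2k+2)_{j-k}$ as $(k+\beta+2)_j/(k+\beta+2)_k$ via a Pochhammer split, whose reciprocal $(k+\beta+2)_k$ and the constant $(\beta'+2)_{n'}=(k+\beta+2)_{n-l}/(k+\beta+2)_k$ assemble into the factor $W$ of~\eqref{E:W} after including the $\binom{n}{k}^{-1}$ from $U_k$. Collecting all $n$-, $k$-, $l$-dependent constants into $W$ and leaving the $j$-dependent part as $(-1)^j U_j/\big[(\alpha+2l+1)_{n-l-j}(k+\beta+2)_j\big]$ completes the identification; this part is routine Pochhammer manipulation rather than a genuine obstacle.
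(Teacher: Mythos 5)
Your proposal is correct and follows exactly the route the paper takes: the paper proves Theorem~\ref{T:C-rec} in one line ("Combining \eqref{E:Cij} with Theorem~\ref{T:c-rec}\dots"), and your write-up is precisely that combination carried out in detail — the $U$-ratio computations (e.g.\ $A(j-k)\,U_j/U_{j+1}=A^\ast(j)$) and the Pochhammer splits for the starting values all check out.
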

Using equation \eqref{E:C-rec}, one may obtain the element $C_{i+1,j}$ in terms 
of four elements from the rows number $i$ and $i-1$ (see Table~\ref{Tab:C-cross}).
Now,   the $C$-table can be completed very easily in the following way.
\begin{table}[h!]
\caption{The cross rule \label{Tab:C-cross}}
\vspace*{-3ex}
\normalsize
\[
\begin{array}{ccc}
    &C_{i-1,j}&\\[2ex]
    C_{i,j-1}& C_{ij}&C_{i,j+1}\\[2ex]
    &\fbox{$C_{i+1,j}$}&\\ 
\end{array}
\]	
\end{table}
\newpage
\begin{alg}[Computing the B\'ezier coefficients of the polynomials $ D^{(n,k,l)}_i$]
        \label{A:Ctabcomp}
\ \begin{enumerate}
\item Compute quantities $C_{kk},\;C_{k,k+1},\ldots,\,C_{k,n-l},$
filling the first row of the $C$-table, by the formulas
\begin{align}
	\label{E:Cknl}
	  C_{k,n-l}:=&\rbinom{n}{k}\,\rbinom{n}{l}\;
	  \frac{(-1)^{n-k-l}(\sigma+2k+2l+1)_{n-k-l}}
	                                  {B(\alpha+2l+1,\,\beta+2k+1)\,(n-k-l)!},\\[2ex]
	\label{E:Ckj}
	  C_{kj}:=&\frac{(j-n)(j-k+1)(j+\beta+k+2)}{(j+1)(j-n+l)(j-\alpha-l-n)}\,C_{k,j+1}\nonumber \\[1ex]
	  &\hphantom{\frac{(j-n)(j-k+1)(j+\beta+k+2)}{(j+1)(j-n+l)(j-\alpha-l-n)}} 
	  (j=n-l-1,n-l-2,\ldots,k).
\end{align}
\item For $i=k,k+1,\ldots,n-l-1$ and $j=k,k+1,\ldots,n-l$, compute
      			$C_{i+1,j}$, using the recurrence   \eqref{E:C-rec}.
\end{enumerate}
\end{alg}
\begin{rem}
	\label{R:unconstr}
Obviously, in case $k=l=0$, the above scheme allows to compute the B\'ezier coefficients
$c_{ij}(n,\alpha,\beta)$ of the unconstrained dual polynomial $D^n_{i}(x;\alpha,\beta)$ (cf \eqref{E:DinB}).
\end{rem}

\begin{rem}\label{R:C-symmetry}
If $\alpha=\beta$ and $k=l$, we have the following symmetry property of the $C$-table:
\[  
	\label{E:C-symmetry}
	C_{ij}=C_{n-i,n-j}\qquad(k\le i,j\le n-k). 
\]  
This can be verified easily, using the definition \eqref{E:Cij} of $C_{ij}$ as well as
the identity 
\[Q_m(n-i;\alpha,\alpha,n)=(-1)^mQ_m(i;\alpha,\alpha,n).\] 
Thus, the cost of computing the $C$-table can be halved in this case. 
\end{rem}

 \begin{rem}\label{R:J&RA-form}
J\"uttler \cite{Jue98} studied the case of $k=l$ and
$\alpha=\beta=0$ and
gave a rather complex recurrence relation in $i$ and $k$,  for the coefficients $c^{(k)}_{i,j}$ in 
\[
	D^{(n,k,k)}_i(x;0,0)=\sum_{j=k}^{n-k}c^{(k)}_{i,j}\,B^n_j(x)
	\qquad (k\le i\le n-k).
\]
Rababah and Al-Natour \cite{RA08}  extended his approach to the case of
arbtitrary $\alpha,\;\beta>-1$.
\end{rem}

\section{Applications}
							\label{S:appl}

Consider the following approximation problem.
Given the function $f$ defined on $[0,\,1]$,
 we look for a polynomial $P_m\in\Pi_m$,
\begin{equation}\label{E:Pm}
P_m(t):=\sum_{i=0}^{m}p_i\,B^m_i(t),	
\end{equation} 
which gives minimum value of the squared norm
\begin{equation}\label{E:err}
\| f-P_m\|^2_{L_2}=\langle f-P_m,\,f-P_m\rangle
\end{equation}
with the constraints
\begin{equation}
	\label{E:constr}
\left.\begin{array}{ll}
  f^{(i)}(0)=P^{(i)}_m(0)&\quad  (i=0,1,\ldots,k-1),\\[1.5ex]
  f^{(j)}(1)=P^{(j)}_m(1)&\quad (j=0,1,\ldots,l-1),	
\end{array}\ \right\}
\end{equation}
where $k+l\leq m$.

This general model contains, for instance, 
\begin{enumerate}
\itemsep4pt
\item \textit{multi-degree reduction problem}, where $f=L_n$ is the polynomial of degree $n$,
$n>m$, given in B\'ezier form,
\[
	   L_n=\sum_{i=0}^{n}l_i\,B^n_i
\]
(see, e.g., \cite{WL09} and references given therein);

\item  \textit{computing roots of polynomials by the method of clipping} \cite{SN90,BJ07,LZLW09},
       which uses a specific variant of the multi-degree reduction with low
       value of the degree $m$, say, less than five; 
\item \textit{polynomial approximation of the rational B\'ezier form}, where $f=R_n$,
\[
	R_n=\frac{ \sum_{i=0}^n \omega_ir_i\,B^n_i}
	{ \sum_{i=0}^n \omega_i\,B^n_i}
\]	
(see, e.g., \cite{LWK10} and references given therein).
\end{enumerate}

\begin{thm}
	\label{T:main}
The coefficients $p_0,p_1, \ldots,p_m$ of the polynomial
\eqref{E:Pm} minimising the error \eqref{E:err} with constraints \eqref{E:constr}
are given by
\begin{align}
	\label{E:pi-begin}
	  p_i=&\frac{(m-i)!}{m!} f^{(i)}(0)-\sum_{j=0}^{i-1}(-1)^{i+j}\binom{i}{j}p_j
	  \qquad (i=0,1,\ldots,k-1); \\
	\label{E:pi-end}
	  p_{m-i}=&(-1)^i\frac{(m-i)!}{m!} f^{(i)}(1)
	          -\sum_{j=1}^{i}(-1)^{j}\binom{i}{j}p_{m-i+j}
		   \qquad (i=0,1,\ldots,l-1);\\
	 \label{E:pi-mid}
	  p_{i}=&\sum_{j=k}^{m-l}C_{ij}(m,k,l,\alpha,\beta)\,\langle f,\,B^m_j\rangle 
	  -
	  \left(\sum_{j=0}^{k-1}+\sum_{j=m-l+1}^{m}\right)p_j\,K_{ij}\nonumber \\
 &\hphantom{ p_{i}=\sum_{j=k}^{m-l}c_{ij}(m,k,l,\alpha,\beta)\,\langle f,\,B^m_j\rangle }
	  \qquad\qquad(i=k,k+1,\ldots,m-l), 	
\end{align}
where  $C_{ij}(m,k,l,\alpha,\beta)$ are introduced in \eqref{E:constrDinB} and
\begin{align}		
	\label{E:K}
	K_{ij}	
	=&\binom{m}{j}\rbinom{m}{i}\,\frac{(-1)^{i-k}(k-j)_{m-k-l+1}}{(i-j)(i-k)!(m-l-i)!}
      	\nonumber \\[1ex]
	&\times
	\frac{(\alpha+l+1)_{m-j}(\beta+k+1)_{j}}{(\alpha+l+1)_{m-i}(\beta+k+1)_{i}}.	 
\end{align}
\end{thm}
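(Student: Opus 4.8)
\emph{Boundary coefficients.} My plan is to handle the clamped endpoint coefficients first and the interior ones afterwards. For $p_0,\dots,p_{k-1}$ and $p_{m-l+1},\dots,p_m$ I would start from the classical expressions for the endpoint derivatives of a polynomial in Bernstein form,
\[
P_m^{(i)}(0)=\frac{m!}{(m-i)!}\sum_{j=0}^{i}(-1)^{i-j}\binom{i}{j}p_j,\qquad
P_m^{(i)}(1)=(-1)^i\frac{m!}{(m-i)!}\sum_{j=0}^{i}(-1)^{i-j}\binom{i}{j}p_{m-j},
\]
the second following from the first via the symmetry $B_j^m(1-x)=B_{m-j}^m(x)$. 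Imposing the interpolation conditions \eqref{E:constr} turns these into lower–triangular systems in $p_0,\dots,p_{k-1}$ and in $p_m,p_{m-1},\dots,p_{m-l+1}$; isolating the diagonal term ($p_i$, resp.\ $p_{m-i}$) and transferring the rest to the right-hand side gives exactly \eqref{E:pi-begin} and \eqref{E:pi-end}.

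\emph{Normal equations.} With the boundary values now fixed, I would minimise the (strictly convex, since the Gram matrix of the $B_i^m$ is positive definite) quadratic \eqref{E:err} over the free parameters $p_k,\dots,p_{m-l}$. Differentiating with respect to each free $p_i$ and setting the derivative to zero yields the normal equations $\ip{f-P_m}{B_i^m}=0$ for $i=k,\dots,m-l$. Because every $D_i^{(m,k,l)}$ lies in $\Pi_m^{(k,l)}=\mathrm{span}\{B_k^m,\dots,B_{m-l}^m\}$, expanding it through \eqref{E:constrDinB} and pairing shows at once that $\ip{f-P_m}{D_i^{(m,k,l)}}=0$ as well.

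\emph{Inversion via the dual basis.} Now split $P_m=P_0+P_1$, where $P_1=\sum_{j=k}^{m-l}p_j B_j^m\in\Pi_m^{(k,l)}$ carries the free coefficients and $P_0=\sum_{j=0}^{k-1}p_jB_j^m+\sum_{j=m-l+1}^{m}p_jB_j^m$ collects the clamped terms. The defining relation $\ip{D_i^{(m,k,l)}}{B_j^m}=\delta_{ij}$ ($k\le i,j\le m-l$) gives $\ip{P_1}{D_i^{(m,k,l)}}=p_i$, so the identity of the previous paragraph becomes $p_i=\ip{f}{D_i^{(m,k,l)}}-\ip{P_0}{D_i^{(m,k,l)}}$. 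Applying \eqref{E:constrDinB} to the first term produces $\sum_{j=k}^{m-l}C_{ij}\ip{f}{B_j^m}$, while the second term is $\sum_j p_j K_{ij}$ with $K_{ij}:=\ip{B_j^m}{D_i^{(m,k,l)}}$ and $j$ ranging over the boundary indices $\{0,\dots,k-1\}\cup\{m-l+1,\dots,m\}$; this is precisely \eqref{E:pi-mid}. It remains only to evaluate $K_{ij}$ in closed form.

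\emph{Evaluating $K_{ij}$ (the main obstacle).} This computation is where the real work lies. I would insert the factorisation \eqref{E:constrD}, obtaining $K_{ij}=U_i\binom{m}{j}\int_0^1 x^{\beta+j+k}(1-x)^{\alpha+m-j+l}\,D_{i-k}^{\,n'}(x;\alpha+2l,\beta+2k)\,\dx$ with $n'=m-k-l$, and then substitute the short Jacobi expansion \eqref{E:dB-nicef}. The crucial structural fact is that each summand there involves $R_{n'-h}^{(\alpha+2l,\,\beta+2k+h+1)}$, whose upper ${}_2F_1$-parameter equals $n'+\sigma'+1$ with $\sigma':=\alpha+\beta+2k+2l+1$, \emph{independently of $h$}. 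Consequently, after Euler's beta integral the terminating series collapses, one upper and one lower parameter coinciding,
\[
\hyper32{-(n'-h),\,n'+\sigma'+1,\,\alpha+m-j+l+1}{\alpha+2l+1,\,n'+\sigma'+1}{1}
=\hyper21{-(n'-h),\,\alpha+m-j+l+1}{\alpha+2l+1}{1},
\]
which is summed by Chu--Vandermonde. After reindexing the factors $(\,\cdot\,)_{n'-h}$, the remaining sum over $h$ reduces to a second Chu--Vandermonde sum $\hyper21{-(i-k),\,1}{k-j+1}{1}$, whose value, together with the prefactor, supplies (up to the sign $(-1)^{m-k-l}$) the factor $(k-j)_{m-k-l+1}/\bigl[(i-j)(m-k-l)!\bigr]$ appearing in \eqref{E:K}. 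Collecting the beta-function, binomial and Pochhammer prefactors then yields \eqref{E:K}. I expect the genuine difficulty to be the bookkeeping of the parameter shifts and signs through these two summations; as a useful consistency check, note that $(k-j)_{m-k-l+1}$ vanishes for every interior $j$, correctly reflecting the biorthogonality that lets $K_{ij}$ enter \eqref{E:pi-mid} only through the boundary indices.
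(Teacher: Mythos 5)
Your argument matches the paper's proof step for step: the endpoint coefficients come from the same Bernstein endpoint-derivative formulas used as a lower-triangular system, and the interior ones from minimising over the free coefficients and extracting them with the dual basis, giving $p_i=\ip{f-P_0}{D^{(m,k,l)}_i}$ with $K_{ij}=\ip{B^m_j}{D^{(m,k,l)}_i}$ exactly as in the paper. The one difference is that the paper simply cites \cite{LWK10} for the closed form \eqref{E:K}, whereas you sketch its derivation via \eqref{E:constrD}, \eqref{E:dB-nicef}, the beta integral and two Chu--Vandermonde summations; that sketch is sound (the collapse of the ${}_3F_2$ to a ${}_2F_1$ and the resulting factor $(k-j)_{m-k-l+1}/\bigl[(i-j)(m-k-l)!\bigr]$ both check out), so on this point your write-up is actually more self-contained than the paper's.
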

\begin{proof} Recall that  (see, e.g., \cite[p. 49]{Far96})
\begin{align*}
P_m^{(j)}(0)=&
\frac{m!}{(m-j)!}\sum_{h=0}^{j}(-1)^{j+h}\binom{j}{h}p_{h},\\
P^{(j)}_m(1)=&
\frac{m!}{(m-j)!}\sum_{h=0}^{j}(-1)^{j+h}\binom{j}{h}p_{m-j+h}. 
\end{align*}
Using the above equations 
in \eqref{E:constr}, we obtain the forms 
\eqref{E:pi-begin} and
\eqref{E:pi-end} for the coefficients $p_0,\,p_1,\ldots,p_{k-1}$ and
$p_{m-l+1},\ldots,p_{m-1},p_m$, respectively.

The remaining coefficients $p_i$ are to be determined so that
\[
\| f-P_m\|^2_{L_2}=\| W-\sum_{i=k}^{m-l}p_iB^m_i\|^2_{L_2}	
\]
has the least value, where
\[
W:=f-\left(\sum_{j=0}^{k-1}+\sum_{j=m-l+1}^{m}\right)p_jB^m_j.
\]

Remembering that  $B^m_i$ and $D^{(m,k,l)}_i$ ($k\le i\le m-l$) are dual
bases in the space $\Pi_m^{(k,l)}$, we obtain the formula
\[
p_i=\ip{W}{ D^{(m,k,l)}_i}\\
=\sum_{j=k}^{m-l}C_{ij}(m,k,l,\alpha,\beta)\,\langle f,\,B^m_j\rangle
    -\left(\sum_{j=0}^{k-1}+\sum_{j=m-l+1}^{m}\right)p_jK_{ij},  
\] 
where
\[
	K_{ij}:=\ip {B^m_j}{D^{(m,k,l)}_i}.
\]
Formula \eqref{E:K} was obtained in \cite{LWK10}. Equation \eqref{E:pi-mid} now readily 
follows.
\end{proof}

\begin{rem}\label{R:1}
Evaluation of quantities $\langle f,\,B^m_j\rangle$, which are present in equation
\eqref{E:pi-mid}, is a simple task in case of multi-degree reduction
or computing zeros by B\'ezier clipping, as  we have 
\[
	\ip{f}{B^m_j}=\ip{L_n}{B^m_j}=\sum_{i=0}^{n}l_i\,
	\ip{B^n_i}{B^m_j},
\]
and 
\[
	\ip{ B^n_i}{B^m_j}= B(\alpha+1,\beta+1)\binom{n}{i}\binom{m}{j}
	\frac{(\alpha+1)_{n+m-i-j}(\beta+1)_{i+j}}{(\alpha+\beta+2)_{n+m}}.
\]
In case of approximation of the rational function, we have
\[
	\ip{f}{B^m_j}=\ip{R_n}{B^m_j}
	= \sum_{i=0}^n \omega_ir_i I_{ij},
\]
where the integrals
\(
	I_{ij}:=\ip {B^n_i}
	{{B^m_j}	
	\Big/{ \sum_{h=0}^n \omega_h\,B^n_h}}
\)
should be computed using numerical methods (see, e.g., \cite{LWK10}).
\end{rem}

\section*{Conclusions}
                    \label{sec:Concl}

We gave explicit formulae   for the B\'ezier coefficients of the constrained 
dual Bernstein basis polynomials
 in terms of the Hahn orthogonal polynomials. Using difference equation satisfied by
the latter polynomials, we obtained a recursive algorithm to compute  efficiently
these  coefficients. 
Applications of this result to several problems is discussed, namely 
to the multi-degree reduction problem for the
B\'ezier curves, computing roots of polynomials by the method of B\'ezier clipping
and  approximation of the rational B\'ezier curves by B\'ezier curves.

\vspace{1ex}
{\small

}
\end{document}